\DeclareMathOperator{\sign}{sign}
\newcommand{\ef}{\end{equation}}
\chardef\bslash=`\\ % p. 424, TeXbook
\newtheorem{thm}{Theorem} [section]
\newtheorem*{thm*}{Theorem}
\newtheorem{lem}[thm]{Lemma}
 \theoremstyle{definition}
\newtheorem{defn}[thm]{Definition}%[section]
\newtheorem*{examp*}{Example}
 \theoremstyle{remark}
 \newtheorem{remark}[thm]{Remark}
 \newtheorem{example}[thm]
 {Example}
\newtheorem*{acknowledgment*} {Acknowledgment}
\newcommand{\thmref}[1]{Theorem~\ref{#1}}
\newcommand{\lemref}[1]{Lemma~\ref{#1}}
 \renewcommand{\sectionmark}[1]{}
 \newcommand{\supp} {\operatorname{supp}}
\newcommand{\loc} {\operatorname{loc}}
 \date{}
\begin{document}

\title[Correct solvability, embedding theorems and separability] {Correct solvability, embedding theorems and separability for  the Sturm-Liouville equation}
\author[N.A. Chernyavskaya]{N.A. Chernyavskaya}
\address{Department of Mathematics and Computer Science, Ben-Gurion
University of the Negev, P.O.B. 653, Beer-Sheva, 84105, Israel}

\author[L.A. Shuster]{L.A. Shuster}
\address{Department of Mathematics,
 Bar-Ilan University,  52900 Ramat Gan, Israel} % \\ \newline Corresponding author: L.A. Shuster}
 \email{miriam@macs.biu.ac.il%\\ \newline Corresponding author: L.A.
 %Shuster
 }
 %\thanks{Corresponding author: L.A. Shuster}

 \subjclass[2010] {Primary 46E35; Secondary 34B24}
 \keywords{Sobolev space, embedding theorem, Sturm-Liouville equation}

\begin{abstract}
Let $p\in[1,\infty),$ $f\in L_p(\mathbb R)$ and $0\le q\in L_1^{\loc}(\mathbb R)$.
We consider the equation
\begin{equation}\label{1}
- y''(x)+q(x)y(x)=f(x),\quad x\in \mathbb R
\end{equation}
and the weighted function space
\begin{equation}\label{2}
S_p^{(2)}(R,q)=\{y\in AC_{\loc}^{(1)}(\mathbb R):\|y''-qy\|_p+\|q^{1/p}y\|_p<\infty\}.
\end{equation}
We show that there exists an embedding $S_p^{(2)}(R,q)\hookrightarrow L_p(\mathbb R)$ if and only if equation \eqref{1} is correctly solvable in $L_p(\mathbb R).$
\end{abstract}

\maketitle

\baselineskip 20pt

\section{Introduction}\label{introduction}
\setcounter{equation}{0} \numberwithin{equation}{section}

In the present paper, we continue the investigation started in \cite{1},\cite{2}.
 We consider the equation
\begin{equation}\label{1.1}
- y''(x)+q(x)y(x)=f(x),\quad x\in \mathbb R
\end{equation}
where $f\in L_p$ ($L_p(\mathbb R):=L_p),$ $p\in [1,\infty)$ and
\begin{equation}\label{1.2}0\le q\in
L_1^{\loc}(\mathbb R).\end{equation}

  In the sequel, for brevity, we do not include \eqref{1.2} in our statements. Let $AC_{\loc}^{(1)}(\mathbb R)$ be the set of functions absolutely continuous together with their derivatives on every finite interval of the real axis. Under a solution of \eqref{1.1} we mean any function $y\in AC_{\loc}^{(1)}(\mathbb R)$ satisfying \eqref{1.1} almost everywhere on $\mathbb R.$ In addition, we say that for a given $p\in[1,\infty)$ equation \eqref{1.1} is correctly solvable in $L_p$ if (see \cite{1}):
  \begin{enumerate}\item[I)] for any function $f\in L_p$, there exists a unique solution $y\in L_p$ of \eqref{1.1};
  \item[II)] there is an absolute constance $c(p)\in (0,\infty)$ such that the solution of \eqref{1.1} $y\in L_p$ satisfies the inequality
      \begin{equation}\label{1.3}
      \|y\|_p\le c(p)\|f\|_p,\qquad \forall f\in L_p\qquad (\|f\|_{L_p}:=\|f\|_p).
      \end{equation}
      \end{enumerate}

      Precise requirements to the coefficient $q$ which guarantee I)--II) to hold are given in \cite{1} (see \thmref{thm1.1} below). For brevity, in the sequel, we say ``problem I)--II)'' or ``question on\linebreak I)--II)''. By $c$, $c(\cdot)$, we denote absolute positive constants which are not essential for exposition and may differ even within a single chaing of computations. Note that in \cite{2} for $p=1$ we suggest another approach to the problem I)--II). To state the main result of \cite{2}, let us introduce the Sobolev weighted space
      \begin{equation}\label{1.4}
      W_p^{(2)}(R,q)=\big\{y\in AC_{\loc}^{(1)}(\mathbb R): \|y''\|_p+\|qy\|_p<\infty\big\}.
      \end{equation}

\begin{thm}\label{thm1.1} \cite{2} There exists an embedding $W_1^{(2)}(R,q)\hookrightarrow L_1$ if and only if equation \eqref{1.1} is correctly solvable in $L_1$, i.e. (see \cite{1}), if and only if there is $a\in(0,\infty)$ such that $m(a)>0$. Here
\begin{equation}\label{1.5}
    m(a)=\inf_{x\in\mathbb R}\int_{x-a}^{x+a}q(t)dt.
      \end{equation}
        \end{thm}

      In the present paper, we find an analogue of \thmref{thm1.1} in the case $p\in(1,\infty).$ To state our results, let us introduce another weighted space $S_p^{(2)}(R,q),$ $p\in[1,\infty):$
      \begin{equation}\label{1.6}
     S_p^{(2)}(R,q)=\{y\in AC_{\loc}^{(1)}(\mathbb R):\|y"-qy\|_p+\|q^{1/p}y\|_p<\infty\}.
      \end{equation}

      Our main result is the following theorem.

      \begin{thm}\label{thm1.2}
      For $p\in[1,\infty),$ there exists an embedding $S_p^{(2)}(R,q)\hookrightarrow L_p$ if and only if equation \eqref{1.1} is correctly solvable in $L_p$, i.e., if and only if $m(a)>0$ for some $a\in(0,\infty)$ (see \cite{1} and \eqref{1.5}).
      \end{thm}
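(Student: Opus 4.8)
The plan is to prove the two implications between the embedding and correct solvability, using freely the equivalence (correct solvability)~$\Longleftrightarrow$~($m(a)>0$ for some $a$) supplied by \cite{1}. Since for $p=1$ one has $S_1^{(2)}(R,q)=W_1^{(2)}(R,q)$ with equivalent norms (because $\|y''\|_1\le\|y''-qy\|_1+\|qy\|_1$ and conversely), the case $p=1$ is exactly \thmref{thm1.1}; so I assume $p\in(1,\infty)$. For the implication (correct solvability)~$\Rightarrow$~(embedding), correct solvability provides a bounded solution operator $G\colon L_p\to L_p$, $\|Gf\|_p\le c(p)\|f\|_p$, where $z=Gf$ is the unique $L_p$–solution of $-z''+qz=f$. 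First I establish the a priori bound $\|q^{1/p}z\|_p\le c\|f\|_p$: multiplying $-z''+qz=f$ by $|z|^{p-2}z$, integrating over $\mathbb R$, and integrating the second-derivative term by parts gives $(p-1)\int|z|^{p-2}(z')^2+\int q|z|^p=\int f\,|z|^{p-2}z\le\|f\|_p\|z\|_p^{p-1}$, whence $\int q|z|^p\le c\|f\|_p^p$; the boundary terms at $\pm\infty$ vanish by the decay of the $L_p$–solution (justified by performing the identity on $[-N,N]$ and letting $N\to\infty$ along a sequence, using $z,z'\in L_p$). Now take $y\in S_p^{(2)}$ and set $f:=-y''+qy\in L_p$. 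Then $w:=y-Gf$ solves the homogeneous equation, and $\|q^{1/p}w\|_p\le\|q^{1/p}y\|_p+\|q^{1/p}Gf\|_p<\infty$. The decisive step is a rigidity lemma: if $m(a)>0$, every nontrivial solution of $-w''+qw=0$ has $\int_{\mathbb R}q|w|^p=\infty$. Hence $w\equiv0$, so $y=Gf\in L_p$ and $\|y\|_p\le c\|f\|_p=c\|y''-qy\|_p\le c\|y\|_{S_p^{(2)}}$, which is the embedding.

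To prove the rigidity lemma I would use that $-w''+qw=0$ with $q\ge0$ is disconjugate: between two consecutive zeros a nontrivial solution would be convex while nonnegative (or concave while nonpositive) with vanishing endpoint values, forcing $w\equiv0$; thus a nontrivial $w$ has at most one zero and is of one sign on each half-line, convex where positive and concave where negative. Since $w''=qw$ keeps a fixed sign on each side, $w'$ is monotone there, and a short case analysis shows that a bounded nontrivial solution must be constant, hence $q\equiv0$, contradicting $m(a)>0$; therefore $w$ is unbounded on at least one half-line, say $|w|\to\infty$ as $x\to-\infty$, so $|w|\ge c_0>0$ on some $(-\infty,X]$. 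Finally $m(a)>0$ yields $\int_{-\infty}^{X}q=\infty$ (partition the half-line into intervals of length $2a$, each contributing at least $m(a)$), whence $\int q|w|^p\ge c_0^{\,p}\int_{-\infty}^{X}q=\infty$.

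For the converse (embedding)~$\Rightarrow$~(correct solvability) I argue by contraposition: if correct solvability fails then, by \cite{1}, $m(a)=0$ for every $a>0$ (recall $m$ is nonnegative and nondecreasing). Fixing the scale by $a=k$, choose $x_k$ with $\int_{x_k-k}^{x_k+k}q<k^{-2}$; on $I_k:=[x_k-k,x_k+k]$ the product (half-length)$\times\int_{I_k}q\to0$, so the solution $\phi_k$ of $\phi_k''=q\phi_k$ with $\phi_k(x_k)=1$, $\phi_k'(x_k)=0$ satisfies $\phi_k\to1$ and $\phi_k'\to0$ uniformly on $I_k$. Taking a fixed-width cutoff $\chi_k$ (with $\chi_k\equiv1$ on $[x_k-k+1,x_k+k-1]$ and supported in $I_k$) and $y_k:=\chi_k\phi_k$, the identity $y_k''-qy_k=\chi_k''\phi_k+2\chi_k'\phi_k'$ shows that $\|y_k''-qy_k\|_p$ stays bounded, while $\|q^{1/p}y_k\|_p^p\le\big(\max|\phi_k|\big)^p\int_{I_k}q\to0$; on the other hand $\|y_k\|_p^p\ge(2k-2)\big(\min|\phi_k|\big)^p\to\infty$. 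Hence $\|y_k\|_p/\|y_k\|_{S_p^{(2)}}\to\infty$, so no embedding constant exists.

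The main obstacle is the first implication, and within it the rigidity lemma together with the a priori bound $\|q^{1/p}Gf\|_p\le c\|f\|_p$: the energy identity is clean only once the boundary terms at $\pm\infty$ are rigorously disposed of, and the rigidity statement is precisely what converts the local hypothesis $m(a)>0$ into the global exclusion of homogeneous solutions from $S_p^{(2)}$. The test-function direction is comparatively elementary, once one exploits the long, low-mass intervals furnished by the failure of $m(a)>0$.
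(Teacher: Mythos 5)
Your proposal is correct in overall structure, but it mixes one direction that follows the paper's own skeleton with one that is genuinely different, and it contains one repairable gap. Your reduction of the case $p=1$ to \thmref{thm1.1} is exactly right (the paper's proofs are likewise written for $p\in(1,\infty)$). For sufficiency (correct solvability $\Rightarrow$ embedding) your argument is the paper's: given $y\in S_p^{(2)}(R,q)$, set $f=-y''+qy$, let $\tilde y$ be the $L_p$-solution, and show that the homogeneous solution $w=y-\tilde y$, which satisfies $\|q^{1/p}w\|_p<\infty$, must vanish. The paper obtains the finiteness from \thmref{thm2.5} (inequality \eqref{2.7}, cited from the literature) and kills $w$ via the fundamental system $\{u,v\}$ of \lemref{lem2.1} together with $\int^{\pm\infty}q\,dt=\infty$; your ``rigidity lemma'' proved by disconjugacy and convexity is a correct, self-contained and more elementary substitute that avoids importing \lemref{lem2.1} at all. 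For necessity your route is genuinely different and arguably cleaner: arguing by contraposition from $m(a)=0$ for every $a>0$, your single family of test functions (fixed-width cutoffs of Cauchy solutions on long intervals $[x_k-k,x_k+k]$ of mass $<k^{-2}$) replaces the paper's two-step argument, namely \lemref{lem3.1} (ruling out vanishing tails of $q$) followed by the $d(x)$-scaled test functions of Lemmas \ref{lem3.2}--\ref{lem3.4} and the criterion of \thmref{thm2.4} forcing $d_0<\infty$. The Volterra/Gronwall estimate you need ($\phi_k\to1$, $\phi_k'\to0$ uniformly when $(\text{half-length})\times(\text{mass})\to0$) is routine, so that direction is sound.

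The one genuine weak point is your energy-method derivation of $\|q^{1/p}Gf\|_p\le c\|f\|_p$. The paper does not prove this at all: it is exactly \thmref{thm2.5}, quoted from the same source as the solvability criterion, and you could simply cite it. As written, your disposal of the boundary terms invokes ``$z,z'\in L_p$'', but $z'\in L_p$ is never established; the natural route to it (interpolation between $z\in L_p$ and $z''\in L_p$) would require $qz\in L_p$, i.e.\ separability, which can fail for $p>1$ (\exampref{examp1.8}). The gap is repairable without $z'\in L_p$: the boundary function $h=z'|z|^{p-2}z$ equals $\frac1p\big(|z|^p\big)'$, so if $\liminf_{x\to+\infty}h(x)>0$ then $|z|^p$ grows at least linearly, contradicting $z\in L_p$, and similarly $\limsup_{x\to-\infty}h(x)\ge0$; hence there are sequences $N_k\to+\infty$, $M_k\to-\infty$ along which the boundary contribution is at most $2/k$, and the identity passes to the limit (for $1<p<2$ the differentiation of $|z|^{p-2}z$ at zeros of $z$ also needs a standard regularization). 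With this repair, or with a citation replacing the energy argument, your proof is complete.
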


      Let us analyze \thmref{thm1.2}. We need the following definition.

      \begin{defn}\label{defn1.3} Normed vector spaces $B_1$ and $B_2$ that coincide set-theoretically and have equivalent norms are called \textit{isomorphic}.
      \end{defn}

      We write $B_1\sim B_2$ if the spaces $B_1$ and $B_2$ are isomorphic, and $B_1\not\sim B_2$ otherwise. In the following statement, we show that if equation \eqref{1.1} is ``good'', then $W_1^{(2)}(R,q)\sim S_1^{(2)}(R,q).$

      \begin{thm}\label{thm1.4}
      Suppose that equation \eqref{1.1} is correctly solvable in $L_1.$ Then
       \begin{equation}\label{1.7}
    S_1^{(2)}(R,q)\sim W_1^{(2)}(R,q).
      \end{equation}
      \end{thm}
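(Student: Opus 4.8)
The plan is to exploit the fact that for $p=1$ the exponent in the second summand of \eqref{1.6} satisfies $q^{1/p}=q$, so that \emph{both} norms control exactly the same quantity $\|qy\|_1$, and the only formal difference between the spaces is that $W_1^{(2)}(R,q)$ (see \eqref{1.4}) measures $y''$ while $S_1^{(2)}(R,q)$ measures $y''-qy$. I equip the spaces with their natural norms $\|y\|_{W_1^{(2)}}=\|y''\|_1+\|qy\|_1$ and $\|y\|_{S_1^{(2)}}=\|y''-qy\|_1+\|qy\|_1$. Since proving $B_1\sim B_2$ (Definition~\ref{defn1.3}) requires only set-theoretic coincidence together with two-sided norm bounds, I would first verify that the two sets of functions are identical and then produce the equivalence constants directly, without appealing to any fine structure of $q$.

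The mechanism is the elementary pointwise identity $y''=(y''-qy)+qy$. For $y\in W_1^{(2)}(R,q)$ one has $y''\in L_1$ and $qy\in L_1$, hence $y''-qy\in L_1$, so $y\in S_1^{(2)}(R,q)$; conversely, for $y\in S_1^{(2)}(R,q)$ the same identity gives $y''=(y''-qy)+qy\in L_1$ together with $qy\in L_1$, so $y\in W_1^{(2)}(R,q)$. Thus the two spaces coincide set-theoretically. The norm equivalence then follows from the triangle inequality applied in both directions: writing $A=\|y''-qy\|_1$, $B=\|qy\|_1$, $C=\|y''\|_1$, the bounds $C\le A+B$ and $A\le C+B$ immediately yield
\begin{equation}\label{eq:plan-equiv}
\tfrac12\,\|y\|_{W_1^{(2)}}\le \|y\|_{S_1^{(2)}}\le 2\,\|y\|_{W_1^{(2)}},
\end{equation}
with absolute constants independent of $q$. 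This is the entire content of \eqref{1.7}.

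Because the estimate \eqref{eq:plan-equiv} needs no hypothesis on $q$, the assumption that \eqref{1.1} be correctly solvable enters only to guarantee that the two functionals above are genuine \emph{norms} rather than merely seminorms, so that Definition~\ref{defn1.3} is applicable. Indeed, both functionals vanish precisely when $y''=0$ and $qy=0$ almost everywhere, i.e.\ when $y$ is affine and $y$ vanishes on $\{q>0\}$; correct solvability gives $m(a)>0$ for some $a$ (see \eqref{1.5} and \thmref{thm1.1}), which forces $q$ to be positive on a set of positive measure, whence an affine function vanishing there must be identically zero. The honest assessment of the ``main obstacle'' is therefore that there is none of substance at $p=1$: the only care required is to recognize that the essential discrepancy between $S_p^{(2)}(R,q)$ and $W_p^{(2)}(R,q)$ comes entirely from the weight $q^{1/p}$ and thus materializes only for $p>1$, while for $p=1$ the claim collapses to the identity $y''=(y''-qy)+qy$ and the positivity of the norms secured by the standing hypothesis.
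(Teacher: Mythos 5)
Your proposal is correct and takes essentially the same route as the paper: both rest on the identity $y''=(y''-qy)+qy$ and the triangle inequality, producing the identical two-sided bound $\tfrac12\|y\|_{W_1^{(2)}(R,q)}\le\|y\|_{S_1^{(2)}(R,q)}\le 2\|y\|_{W_1^{(2)}(R,q)}$, with correct solvability entering only to make the two functionals genuine norms. The single (harmless) difference is how positivity of the norms is secured: the paper cites the embeddings $W_1^{(2)}(R,q)\hookrightarrow L_1$ and $S_1^{(2)}(R,q)\hookrightarrow L_1$ (Theorems \ref{thm1.1} and \ref{thm1.2}), while you argue directly that $m(a)>0$ forces $q>0$ on a set of positive measure, so an affine function annihilated by both seminorms must vanish identically --- both justifications are valid.
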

      Note that for $p\in(1,\infty)$, there is no complete analogue for \thmref{thm1.4} (see below). To explain this, we need another definition.

       \begin{defn}\label{defn1.5} \cite{3} Let $p\in[1,\infty)$ and suppose that equation \eqref{1.1} is correctly solvable in $L_p.$ We say that it is separable in $L_p$ if there is an absolute constant $c(p)\in(0,\infty)$ such that the solution $y\in L_p$ of \eqref{1.1} satisfies the inequality
        \begin{equation}\label{1.8}
   \|y"\|_p+\|qy\|_p\le c(p)\|f\|_p,\qquad \forall f\in L_p.
      \end{equation}
      \end{defn}

      In the sequel, we say for brevity ``question (problem) on the separability of \eqref{1.1} in $L_p$''.  Note that the question on the separability of \eqref{1.1} was first studied in \cite{6},\cite{7} for $p=2$ (in the language of differential operators). See \cite{9}, \cite{3}, \cite{4} and the references therein. We conclude our analysis of \thmref{thm1.2} with the following assertion.

      \begin{thm}\label{thm1.6}
      Let $p\in(1,\infty)$ and suppose that equation \eqref{1.1} is correctly solvable in $L_p.$ Then
      \begin{equation}\label{1.9}
      S_p^{(2)}(R,q)\sim W_p^{(2)}(R,q)
      \end{equation}
      if and only if \eqref{1.1} is separable in $L_p.$
      \end{thm}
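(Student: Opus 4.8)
The plan is to reduce everything to the solution operator $G\colon L_p\to L_p$ provided by correct solvability ($f\mapsto y$, the unique $L_p$-solution of \eqref{1.1}), and to realize $S_p^{(2)}(R,q)$ as exactly its range. The enabling analytic input is the a priori estimate
\[
\|q^{1/p}Gf\|_p\le c(p)\|f\|_p,\qquad f\in L_p,
\]
which I would obtain (for real $y=Gf$, the complex case following by splitting into real and imaginary parts) by testing \eqref{1.1} against the multiplier $\phi=|y|^{p-2}y$, regularized as $(y^2+\ve)^{(p-2)/2}y$ when $1<p<2$: integration by parts sends the $-y''$ term to $(p-1)\int|y|^{p-2}(y')^2\ge0$, so $\int q|y|^p\le\int f\phi\le\|f\|_p\|y\|_p^{p-1}\le c\|f\|_p^p$, using $\|y\|_p\le c\|f\|_p$ from \eqref{1.3}. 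Together with the embedding $S_p^{(2)}(R,q)\hookrightarrow L_p$ of \thmref{thm1.2} and uniqueness in I), this shows that every $y\in S_p^{(2)}(R,q)$ equals $G(-y''+qy)$ and that every $Gf$ lies in $S_p^{(2)}(R,q)$; moreover, writing $\|\cdot\|_S$ and $\|\cdot\|_W$ for the norms of the two spaces, $\|f\|_p\le\|y\|_S\le c(p)\|f\|_p$ with $f=-y''+qy$. Thus $S_p^{(2)}(R,q)=G(L_p)$ with $\|y\|_S\asymp\|{-}y''+qy\|_p$, and by Definition~\ref{defn1.5} separability is precisely the assertion $\|Gf\|_W\le c(p)\|f\|_p$, i.e.\ that $G$ maps $L_p$ boundedly into $W_p^{(2)}(R,q)$.

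Granting this identification, the \emph{only if} direction is immediate: if $S_p^{(2)}(R,q)\sim W_p^{(2)}(R,q)$ then $\|y\|_W\le c\|y\|_S$ for every $y\in S_p^{(2)}(R,q)$, and applying this to $y=Gf$ together with $\|y\|_S\le c\|f\|_p$ yields $\|y''\|_p+\|qy\|_p\le c\|f\|_p$, which is \eqref{1.8}.

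For the \emph{if} direction, assume \eqref{1.1} is separable. Then $S_p^{(2)}(R,q)\subseteq W_p^{(2)}(R,q)$ with $\|y\|_W\le c\|y\|_S$ is exactly separability read through the identification above. For the reverse inclusion take $y\in W_p^{(2)}(R,q)$, put $f=-y''+qy\in L_p$, and set $\tilde y=Gf$; separability gives $\tilde y\in W_p^{(2)}(R,q)$, so $w:=y-\tilde y\in W_p^{(2)}(R,q)$ solves $-w''+qw=0$. The decisive lemma is that such a $w$ vanishes: since $(ww')'=(w')^2+qw^2\ge0$, the product $ww'$ is nondecreasing, so if it were nonzero at one point then $|w|$ would stay bounded below by a positive constant on a half-line; but $\int_{x-a}^{x+a}q^p\ge(2a)^{1-p}m(a)^p>0$ uniformly (Hölder applied to \eqref{1.5}) forces $\int q^p|w|^p=\iy$ on that half-line, contradicting $qw=w''\in L_p$. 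Hence $ww'\equiv0$, whence $(w')^2+qw^2\equiv0$ and $w\equiv0$ because $m(a)>0$. Therefore $y=\tilde y\in S_p^{(2)}(R,q)$ with $\|y\|_S\le c\|f\|_p\le c\|y\|_W$, the two norms are equivalent, and $S_p^{(2)}(R,q)\sim W_p^{(2)}(R,q)$.

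The structural steps—the identification $S_p^{(2)}(R,q)=G(L_p)$ and the two inclusions—are routine once the two analytic inputs are in hand, so the work concentrates there. I expect the main obstacle to be the a priori estimate for $p\ne2$: justifying the vanishing of the boundary terms in the integration by parts and controlling the regularization as $\ve\to0$ for $1<p<2$ require care, whereas for $p=2$ it is the elementary energy identity. The homogeneous-solution lemma, by contrast, is self-contained once one exploits the monotonicity of $ww'$ together with the uniform mass condition $m(a)>0$.
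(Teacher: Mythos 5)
Your proof is correct, and its overall skeleton matches the paper's: both directions come down to playing the weighted estimate \eqref{2.7} and the separability estimate \eqref{1.8} against each other through the solution operator furnished by correct solvability. There are two genuine differences. (i) Your homogeneous-solution lemma --- if $w\in W_p^{(2)}(R,q)$ and $w''=qw$ then $w\equiv 0$, proved from the monotonicity of $ww'$ together with the H\"older bound $\int_{x-a}^{x+a}q^p\,dt\ge (2a)^{1-p}m(a)^p$ --- makes explicit a step the paper glosses over. In its sufficiency part, the paper obtains the inclusion $W_p^{(2)}(R,q)\subseteq S_p^{(2)}(R,q)$ by applying \eqref{2.7} to an arbitrary $y\in W_p^{(2)}(R,q)$, which tacitly presumes that such a $y$ coincides with the unique $L_p$-solution of \eqref{1.1} with $f=-y''+qy$; that identification is established in the paper only for elements of $S_p^{(2)}(R,q)$ (in the proof of Theorem~\ref{thm1.2}, via the FSS of Lemma~\ref{lem2.1}), not for elements of $W_p^{(2)}(R,q)$. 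Your lemma supplies exactly the missing identification (note that $q\tilde y\in L_p$, needed for $qw\in L_p$, comes from separability), and it does so by an elementary argument that avoids Lemma~\ref{lem2.1} altogether; at this one point your write-up is more rigorous than the paper's. (ii) You re-derive the estimate $\|q^{1/p}y\|_p\le c(p)\|f\|_p$ by the multiplier/integration-by-parts argument. This is precisely Theorem~\ref{thm2.5}, which the paper quotes from \cite{1}, so you may simply cite it; doing so removes the only incomplete portion of your proposal (the unjustified vanishing of boundary terms and the regularization for $1<p<2$), which you yourself flag as the main obstacle. With Theorem~\ref{thm2.5} cited rather than re-proved, your argument is complete.
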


      Note in addition that in view of \thmref{thm1.6}, relation \eqref{1.7} is explained by the fact that every equation \eqref{1.1} which is correctly solvable in $L_p,$ $p\in[1,\infty)$ is separable in $L_1$ (Grinshpun-Otelbaev's theorem (see \cite{8}) -- the statement given her can be found in \cite{5}). In addition, for $p>1,$ the requirement on the separability of equation \eqref{1.1} in $L_p$ becomes essential because for every $p\in(1,\infty)$ there exist equations \eqref{1.1} which are correctly solvable but not separable in $L_p$ (see \cite{4}).

      \begin{example}\label{examp1.7}
      \cite{1} Equation \eqref{1.1} with
      $$q(x)=1+\cos(|x|^\theta),\qquad x\in\mathbb R, \quad \theta >0$$
      is correctly solvable in $L_p,$ $p\in(1,\infty)$ if and only if $\theta\ge 1.$ Note that for $p\in[1,\infty),$ there exists an embedding $S_p^{(2)}(R,q)\hookrightarrow L_p$ if and only if $\theta\ge1.$ Since in this case $q(x)\in[0,2]$ for $x\in\mathbb R,$ for $\theta\ge1$ equation \eqref{1.1} is also separable in $L_p.$ Hence, in this case, $S_p^{(2)}(R,q)\sim W_p^{(2)}(R,q)$ for every $p\in[1,\infty).$
      \end{example}

      \begin{example}\label{examp1.8}
      \cite{4} Let $p\in(1,\infty)$ and let $\alpha,\beta$ be positive numbers such that $\beta<\alpha<p\beta$. Set
      \begin{equation}\label{1.10} q(x)=\begin{cases} 1,\quad x\notin\bigcup\limits_{n=2}^\infty\omega_n,\quad \omega_n=[n-n^{-\alpha},n+n^{-\alpha}],\quad n\ge 2\\
      n^\beta,\quad x\in\omega_n,\quad n\ge 2\end{cases}\end{equation}
      In this case
      $$\inf_{x\in\mathbb R} q(x)=1>0,$$
      and therefore equation \eqref{1.1} with coefficient $q$ from \eqref{1.10} is correctly solvable in $L_s,$ $s\in[1,\infty)$ (see \thmref{thm1.1} above). In \cite{4} it is shown that such an equation is not separable in $L_p.$ Therefore
      $$S_p^{(2)}(R,q)\hookrightarrow L_p,\quad \text{but} \quad S_p^{(2)}(R,q)\not\sim W_p^{(2)}(R,q).$$
      \end{example}

      \section{Preliminaries}
Below we present some facts needed for the proofs (see \S3).

\begin{lem}\label{lem2.1} \cite{5}
Suppose that the following conditions holds:
\begin{equation}\label{2.1}
\int_{-\infty}^x q(t)dt>0,\qquad \int_x^\infty q(t)dt>0\qquad \forall x\in\mathbb R.
\end{equation}
Then the equation
\begin{equation}\label{2.2}
z''(x)=q(x)z(x),\qquad x\in\mathbb R
\end{equation}
has a fundamental system of solutions (FSS) $\{u,v\}$ with the following properties:
\begin{equation}\label{2.3}
u(x)>0,\qquad v(x)>0,\qquad u'(x)\le0,\qquad v'(x)\ge0,\qquad x\in\mathbb R,\end{equation}
$$v'(x)u(x)-u'(x)v(x)=1,\qquad x\in \mathbb R,$$
\begin{equation}\label{2.4}
\lim_{x\to-\infty}\frac{v(x)}{u(x)}=\lim_{x\to\infty}\frac{u(x)}{v(x)}=0.
\end{equation}
\end{lem}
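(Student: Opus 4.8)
The plan is to obtain $u$ and $v$ as the principal (recessive) solutions of \eqref{2.2} at $+\infty$ and at $-\infty$, and to deduce every listed property from the convexity that $q\ge0$ forces on positive solutions, together with the nondegeneracy contained in \eqref{2.1}. The starting observation is that wherever a solution $z$ of \eqref{2.2} is positive one has $z''=qz\ge0$, so $z$ is convex there; consequently \eqref{2.2} is disconjugate, since a solution with two zeros $x_1<x_2$ would be (say) positive and convex on $(x_1,x_2)$, hence would lie below the chord joining its zeros and be $\le0$, a contradiction. In particular, for every $x_0$ and every $b>x_0$ the boundary value problem $z(x_0)=1,\ z(b)=0$ has a unique solution $z_b$, and $z_b>0$ on $[x_0,b)$.

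To build $u$ I would fix $x_0$ and let $b\to\infty$. A nonnegative convex function vanishing at the right endpoint of its interval cannot have a positive derivative anywhere (convexity would then keep it positive), so each $z_b$ is nonincreasing on $[x_0,b)$; moreover the slopes $z_b'(x_0)$ form a bounded monotone family, whence $z_b\to u$ locally uniformly together with first derivatives, where $u$ solves \eqref{2.2}, $u(x_0)=1$, and $u>0,\ u'\le0$ on $[x_0,\infty)$. Convexity of $u$ propagates $u'\le0$ to all of $\mathbb{R}$, and monotonicity then gives $u>0$ everywhere. Reflecting $x\mapsto-x$ produces in the same way a positive, nondecreasing solution $v$. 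These are independent, for $u=cv$ would force $u'\equiv v'\equiv0$ and hence $q\equiv0$, against \eqref{2.1}; since the Wronskian $v'u-u'v$ is constant (no first-order term) and is $\ge0$ by the sign pattern $u>0,\ v'\ge0,\ u'\le0,\ v>0$, it is a positive constant, which I normalize to $1$ by rescaling $u$ and $v$. This yields \eqref{2.3}.

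For the limits \eqref{2.4} I would use \eqref{2.1} to show that each monotone solution is genuinely unbounded on its ``growing'' side. Indeed, if $u$ stayed bounded as $x\to-\infty$ then, being convex and decreasing, it would tend to a finite limit with $u'(x)\to0$; integrating $u''=qu$ over $(-\infty,x_0]$ and using $u(x)\ge u(x_0)>0$ there would give $u'(x_0)\ge u(x_0)\int_{-\infty}^{x_0}q>0$ by \eqref{2.1}, contradicting $u'\le0$. Hence $u(x)\to\infty$ as $x\to-\infty$, while $u$ is bounded at $+\infty$ because it decreases; the symmetric argument gives $v(x)\to\infty$ as $x\to+\infty$ and $v$ bounded at $-\infty$. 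Therefore $u/v\to0$ as $x\to+\infty$ and $v/u\to0$ as $x\to-\infty$, which is \eqref{2.4}. The main obstacle is the single analytic step at the heart of the construction, namely proving that the family $z_b$ actually converges to a nontrivial solution as $b\to\infty$ (equivalently, that the slopes $z_b'(x_0)$ remain bounded and monotone); once $u$ and $v$ exist, the monotonicity, Wronskian, and limit assertions follow routinely from convexity and \eqref{2.1}.
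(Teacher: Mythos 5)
The paper contains no proof of this lemma to compare against: it is imported verbatim from reference \cite{5} (the Green-function paper of the same authors), so your attempt can only be judged on its own merits. What you propose is the classical principal-solutions construction (disconjugacy of $z''=qz$ for $q\ge 0$, recessive solution at $+\infty$ via limits of boundary-value problems, reflection for the other endpoint), and it is essentially correct. Moreover, the step you single out as ``the main obstacle'' -- boundedness and monotonicity of the slopes $z_b'(x_0)$ -- is not actually an obstacle: it follows in a few lines from the disconjugacy you already established. Boundedness above is immediate, since you showed $z_b'\le 0$ on $[x_0,b)$, so $z_b'(x_0)\le 0$. For monotonicity, fix $x_0<b_1<b_2$ and set $w=z_{b_2}-z_{b_1}$, a solution of \eqref{2.2} with $w(x_0)=0$ and $w(b_1)=z_{b_2}(b_1)>0$ (as $z_{b_2}>0$ on $[x_0,b_2)$). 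If $w'(x_0)<0$, then $w$ is negative just to the right of $x_0$ and positive at $b_1$, hence vanishes again in $(x_0,b_1)$; a nontrivial solution with two zeros contradicts disconjugacy. Therefore $z_{b_1}'(x_0)\le z_{b_2}'(x_0)\le 0$, the slopes increase to a finite limit $\alpha\le 0$, and continuous dependence on the initial data $(z_b(x_0),z_b'(x_0))=(1,z_b'(x_0))\to(1,\alpha)$ yields the locally uniform $C^1$ convergence you invoke. With this inserted, your construction of $u$ (and by reflection $v$) is complete.

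Two smaller points should be tightened. First, the sentence ``convexity of $u$ propagates $u'\le0$ to all of $\mathbb{R}$, and monotonicity then gives $u>0$ everywhere'' is circular as phrased, since convexity is only available where $u$ is positive; run it instead as a continuation argument: with $a=\inf\{t:\ u>0\ \text{on}\ (t,\infty)\}$, convexity on $(a,\infty)$ gives $u'\le u'(x_0)\le 0$ there, hence $u\ge u(x_0)=1$ on $(a,x_0]$, so $a>-\infty$ would force $u(a)\ge 1>0$ and contradict the definition of $a$; thus $a=-\infty$. Second, you assert existence of the boundary-value solution $z_b$ but only argue uniqueness; existence follows by shooting from the right endpoint: the solution $w$ with $w(b)=0$, $w'(b)=-1$ satisfies $w(x_0)\ne 0$ by disconjugacy, and $z_b=w/w(x_0)$. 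The rest of your argument -- linear independence from the sign pattern and \eqref{2.1}, constancy and positivity of the Wronskian with normalization to $1$, and the proof of \eqref{2.4} by showing that a bounded monotone convex solution would have derivative tending to zero on its growing side, which upon integrating $u''=qu$ contradicts $\int_{-\infty}^{x_0}q\,dt>0$ -- is correct as written.
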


\begin{lem}\label{lem2.2}
Suppose that \eqref{2.1} holds. For a given $x\in\mathbb R,$ consider the equation in $d\ge0:$
\begin{equation}\label{2.5}
d\int_{x-d}^{x+d}q(t)dt=2.
\end{equation}
Equation \eqref{2.5} has a unique finite positive solution (denoted by $d(x),$ $x\in \mathbb R).$
\end{lem}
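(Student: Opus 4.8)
The plan is to fix $x\in\mathbb{R}$, treat \eqref{2.5} as a scalar equation in the single variable $d$, and analyze the function
\[
Q(d)=\int_{x-d}^{x+d}q(t)\,dt,\qquad F(d)=d\,Q(d),\qquad d\ge0,
\]
so that the assertion becomes: $F(d)=2$ has exactly one root $d>0$. First I would record the elementary properties of $Q$. Since $q\ge0$, the function $Q$ is nonnegative and nondecreasing, and $Q(0)=0$. Because $q\in L_1^{\loc}(\mathbb{R})$, absolute continuity of the integral makes $d\mapsto Q(d)$ continuous, hence $F$ is continuous with $F(0)=0$. Finally, $Q(d)\to\int_{-\infty}^{x}q+\int_{x}^{\infty}q$ as $d\to\infty$, and this limit is strictly positive (possibly $+\infty$) precisely by hypothesis \eqref{2.1}; this is the only place the hypothesis is used.

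For existence I would argue via the intermediate value theorem. From the positivity of the limit of $Q$ there is a $d_0>0$ and a constant $c_0>0$ with $Q(d)\ge c_0$ for all $d\ge d_0$, whence $F(d)\ge c_0\,d\to\infty$. Since $F$ is continuous with $F(0)=0<2$ and $F(d)\to\infty$, there exists a $d(x)>0$ solving $F(d(x))=2$.

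The main obstacle is uniqueness, because $F$ need not be strictly increasing on all of $[0,\infty)$: if $q$ vanishes on intervals, then $Q$ is locally constant and $F$ can have flat-slope behaviour where $Q=0$. The key observation that resolves this is that $F$ \emph{is} strictly increasing on the set $\{d:Q(d)>0\}$. Indeed, if $0<d_1<d_2$ with $Q(d_1)>0$, then monotonicity of $Q$ gives $Q(d_2)\ge Q(d_1)>0$, so
\[
F(d_2)=d_2\,Q(d_2)\ge d_2\,Q(d_1)>d_1\,Q(d_1)=F(d_1),
\]
where the strict inequality comes from the factor $d$ alone. Since any solution of $F(d)=2$ has $F(d)=2>0$ and therefore $Q(d)>0$, every solution lies in this region of strict monotonicity; hence the solution $d(x)$ is unique. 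I would close by noting that finiteness of $d(x)$ is automatic, as $d(x)$ was produced as a finite root.
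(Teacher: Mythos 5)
Your proof is correct and complete. Note that the paper itself gives no proof of this lemma: it is stated as a preliminary fact, with the function $d(x)$ attributed to M.~Otelbaev (Remark~\ref{rem2.3} and the reference to his book), so there is no in-paper argument to compare against. Your argument is the natural self-contained one, and all the delicate points are handled properly: continuity of $Q(d)=\int_{x-d}^{x+d}q(t)\,dt$ via absolute continuity of the Lebesgue integral for $q\in L_1^{\loc}$, existence via the intermediate value theorem using hypothesis \eqref{2.1} only to guarantee $\lim_{d\to\infty}Q(d)>0$ (hence $F(d)=dQ(d)\to\infty$), and, most importantly, uniqueness. The uniqueness step is exactly where a careless argument would fail --- $F$ need not be strictly increasing globally, since $q$ may vanish on intervals --- and your observation that strict monotonicity holds at any $d_1$ with $Q(d_1)>0$ (the strict inequality coming from the factor $d$ alone, $d_2Q(d_1)>d_1Q(d_1)$), combined with the fact that every root of $F(d)=2$ automatically satisfies $Q(d)>0$, closes this gap cleanly. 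One could even extract the slightly stronger statement implicit in your argument: $F$ is strictly increasing on $\{d>0: Q(d)>0\}$, which is an interval of the form $(d_*,\infty)$ or $[d_*,\infty)$, and $F\le 0+{}$constant${}=0$ below it; but this refinement is not needed for the lemma.
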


\begin{remark} \label{rem2.3}
The function $d(x),$ $x\in\mathbb R$ was introduced by M. Otelbaev (see \cite{9}).
\end{remark}

\begin{thm}\label{thm2.4} \cite{1}
Equation \eqref{1.1} is correctly solvable in $L_p,$ $p\in[1,\infty)$ if and only if \eqref{2.1} holds and $d_0<\infty.$ Here
\begin{equation}\label{2.6}
d_0=\sup_{x\in\mathbb R}d(x).
\end{equation}
\end{thm}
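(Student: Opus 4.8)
The plan is to reduce the question on I)--II) to the $L_p$-boundedness of the Green operator of \eqref{1.1} and then to control its kernel by Otelbaev's scale function $d(x)$ from \eqref{2.5}. Assuming \eqref{2.1}, \lemref{lem2.1} provides the FSS $\{u,v\}$ with $u>0$ decreasing, $v>0$ increasing and $v'u-u'v\equiv1$; variation of parameters shows that for $f\in L_p$ the function
\[
y(x)=u(x)\int_{-\infty}^x v(t)f(t)\,dt+v(x)\int_x^\infty u(t)f(t)\,dt
\]
solves \eqref{1.1}, so $y=Gf$ with the nonnegative symmetric kernel $G(x,t)=u(\max\{x,t\})\,v(\min\{x,t\})$. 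Since $u$ dominates $v$ at $-\infty$ and $v$ dominates $u$ at $+\infty$ by \eqref{2.4}, any nontrivial solution of the homogeneous equation \eqref{2.2} grows at one of the ends once $u,v$ do, which will yield uniqueness in I); the whole problem I)--II) then reduces to the a priori bound \eqref{1.3} for $y=Gf$, i.e. to the boundedness of $G$ on $L_p$.

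The analytic core, which I expect to be the main obstacle, consists of two-sided estimates of the FSS in terms of $d(x)$: the diagonal estimate $u(x)v(x)\asymp d(x)$ and the tail estimates $\int_{-\infty}^x v(t)\,dt\asymp d(x)v(x)$ and $\int_x^\infty u(t)\,dt\asymp d(x)u(x)$. These are obtained by combining the monotonicity \eqref{2.3}, the constancy of the Wronskian and the defining identity \eqref{2.5}; the constant model $q\equiv a^2$, for which $d\equiv a^{-1}$, $u(x)v(x)=1/(2a)$ and $G(x,t)=(2a)^{-1}e^{-a|x-t|}$, fixes the correct normalization. Granting them, one computes, uniformly in $x$,
\[
\int_{-\infty}^{\infty}G(x,t)\,dt=u(x)\int_{-\infty}^x v(t)\,dt+v(x)\int_x^\infty u(t)\,dt\asymp d(x)\,u(x)v(x)\asymp d(x)^2\le d_0^2,
\]
and, $G$ being symmetric, the same bound for $\int G(x,t)\,dx$; hence both the $L_1$ and the $L_\infty$ operator norms of $G$ are at most $c\,d_0^2$.

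For sufficiency I would interpolate (Riesz--Thorin) between these $L_1\to L_1$ and $L_\infty\to L_\infty$ bounds, or use a Schur test, to get $\|y\|_p=\|Gf\|_p\le c(p)\,d_0^2\,\|f\|_p$ for all $p\in[1,\infty)$, which is \eqref{1.3}; uniqueness follows because $d_0<\infty$ forces the uniform lower bound $\int_{x-d_0}^{x+d_0}q\ge 2/d_0>0$, so $u,v\notin L_p$. For necessity, if $d_0=\infty$ pick $x_n$ with $d(x_n)\to\infty$ and test \eqref{1.3} with $f_n=|I_n|^{-1/p}\chi_{I_n}$, where $I_n$ is the interval of length $d(x_n)$ centered at $x_n$; since $G(x,t)\ge c\,d(x_n)$ on $I_n\times I_n$, one finds $\|Gf_n\|_p\ge c\,d(x_n)^2$ while $\|f_n\|_p=1$, contradicting II). Finally, if \eqref{2.1} fails then $q$ vanishes on a half-line, the dichotomy of \lemref{lem2.1} collapses and \eqref{1.1} loses either existence or uniqueness in $L_p$; thus correct solvability forces \eqref{2.1}. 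Combining these steps yields the asserted equivalence with \eqref{2.6}.
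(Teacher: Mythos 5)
First, a point of order: \thmref{thm2.4} is stated in this paper \emph{without proof} --- it is imported from \cite{1} as a preliminary --- so there is no in-paper proof to compare your argument against; the closest internal material is \S3, where Lemmas \ref{lem3.2}--\ref{lem3.4} (also quoted from \cite{1}) are used for the necessity half of \thmref{thm1.2}. Your overall plan (Green kernel $G(x,t)=u(\max\{x,t\})\,v(\min\{x,t\})$, estimates via Otelbaev's $d(x)$, Schur test/interpolation) is indeed the route of the cited literature, but as written it has genuine gaps. The estimates you call the ``analytic core'' --- $u(x)v(x)\asymp d(x)$ and the tail bounds $\int_{-\infty}^x v\,dt\asymp d(x)v(x)$, $\int_x^\infty u\,dt\asymp d(x)u(x)$ --- do not follow ``by combining \eqref{2.3}, the Wronskian and \eqref{2.5}'': the diagonal estimate is itself a deep result (it is the subject of \cite{5}), and the tail estimates, in the form you assert them (uniformly in $x$, under \eqref{2.1} alone, with $d_0<\infty$ invoked only afterwards via $d(x)\le d_0$), are simply false. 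Counterexample: $q(t)=e^{-t^2}$ satisfies \eqref{1.2} and \eqref{2.1}, but since $\int^{\infty}t\,q(t)\,dt<\infty$ the recessive solution $u$ tends to a positive constant at $+\infty$, so $\int_x^\infty u\,dt=\infty$ while $d(x)u(x)<\infty$ for each $x$; by symmetry the $v$-estimate fails as well. Hence your kernel bound $\int_{\mathbb R} G(x,t)\,dt\le c\,d_0^2$ cannot be derived in the order you propose: the hypothesis $d_0<\infty$ must enter the proof of the decay of $u$ and $v$ themselves, and establishing that decay is the real work of the theorem, not a corollary of \eqref{2.3} and \eqref{2.5}.

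Second, the necessity half has a logical gap. Inequality \eqref{1.3} constrains the unique $L_p$-solution of \eqref{1.1}, not $Gf_n$: when $d_0=\infty$ the operator $G$ need not map $L_p$ into $L_p$ (it need not even be defined on all of $L_p$), and any solution equals $Gf_n+c_1u+c_2v$, so a lower bound for $\|Gf_n\|_{L_p(I_n)}$ contradicts II) only after you rule out cancellation by the homogeneous part; moreover your pointwise bound $G(x,t)\ge c\,d(x_n)$ on $I_n\times I_n$ again rests on the unproven slow-variation estimates for $u,v$ on the scale $d$. The standard repair --- visible in this paper's own \S3 --- is the compactly supported test solution $z$ of \eqref{3.9}: it solves \eqref{1.1} with $f=f_1$, where $\|f_1\|_p\le C$ by \eqref{3.11}; it lies in $L_p$ because it has compact support; hence by uniqueness it \emph{is} the $L_p$-solution, and \eqref{3.12} together with \eqref{1.3} gives $d(x)^2\le c\,c(p)\,C$ for every $x$, i.e.\ $d_0<\infty$, with no Green operator and no cancellation issue. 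Finally, your treatment of the case when \eqref{2.1} fails (``loses either existence or uniqueness'') is an assertion, not an argument; \lemref{lem3.1} shows the kind of scaled cut-off construction that is actually needed at that step.
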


\begin{thm}\label{thm2.5} \cite{1}
Suppose that equation \eqref{1.1} is correctly solvable in $L_p,$ $p\in[1,\infty)$  and  let $y\in L_p$ be its solution. Then the following inequality holds:
\begin{equation}\label{2.7}
 \|q^{1/p}y\|_p\le c(p)\|f\|_p,\qquad \forall f\in L_p.
\end{equation}
\end{thm}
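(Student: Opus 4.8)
\emph{Proof proposal.} The plan is to represent the $L_p$-solution $y$ by the Green function built from the fundamental system $\{u,v\}$ of \eqref{2.2}, to split $q^{1/p}y$ into two one-sided weighted Hardy operators applied to $f$, and to prove their $L_p$-boundedness through Muckenhoupt's criterion. First, since \eqref{1.1} is correctly solvable, \thmref{thm2.4} gives that \eqref{2.1} holds and $d_0<\infty$, so \lemref{lem2.1} applies and furnishes an FSS $\{u,v\}$ with the sign, monotonicity, Wronskian ($v'u-u'v=1$) and decay properties \eqref{2.3}--\eqref{2.4}. Using these, the unique $L_p$-solution of \eqref{1.1} is
\begin{equation*}
y(x)=u(x)\int_{-\infty}^x v(t)f(t)\,dt+v(x)\int_x^\infty u(t)f(t)\,dt ,
\end{equation*}
i.e. $y=\int_{\mathbb R}G(x,t)f(t)\,dt$ with the nonnegative kernel $G(x,t)=u(x)v(t)$ for $t\le x$ and $G(x,t)=v(x)u(t)$ for $t\ge x$ (that $-y''+qy=f$ is checked directly from the Wronskian; convergence of the integrals for $f\in L_p$ will follow from the estimates below). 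Hence $q^{1/p}y=K_1f+K_2f$, where
\begin{equation*}
(K_1f)(x)=q(x)^{1/p}u(x)\int_{-\infty}^x v(t)f(t)\,dt,\qquad (K_2f)(x)=q(x)^{1/p}v(x)\int_x^\infty u(t)f(t)\,dt ,
\end{equation*}
and it suffices to bound $\|K_1f\|_p+\|K_2f\|_p\le c(p)\|f\|_p$.

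Each $K_i$ is a Hardy operator with a separated kernel. For $p\in(1,\infty)$ (the case $p=1$ is analogous, with a supremum replacing the $L_{p'}$-integral), Muckenhoupt's weighted Hardy inequality shows that $K_1,K_2:L_p\to L_p$ are bounded if and only if, with $p'=p/(p-1)$,
\begin{equation*}
B_1:=\sup_{r}\Big(\int_r^\infty q u^p\Big)^{1/p}\Big(\int_{-\infty}^r v^{p'}\Big)^{1/p'}<\infty,\quad B_2:=\sup_{r}\Big(\int_{-\infty}^r q v^p\Big)^{1/p}\Big(\int_r^\infty u^{p'}\Big)^{1/p'}<\infty,
\end{equation*}
and then $\|K_i\|\le c(p)B_i$. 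Everything thus reduces to verifying $B_1,B_2<\infty$.

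I would combine two ingredients. The first is elementary: since $u''=qu$ with $u'\le0$ and $u,u'\to0$ at $+\infty$, integration by parts gives
\begin{equation*}
\int_r^\infty q\,u^p=\int_r^\infty u^{p-1}u''=-u(r)^{p-1}u'(r)-(p-1)\int_r^\infty u^{p-2}(u')^2\le u(r)^{p-1}|u'(r)| ,
\end{equation*}
and symmetrically $\int_{-\infty}^r q\,v^p\le v(r)^{p-1}v'(r)$. The second ingredient is the set of a priori estimates tying the FSS to Otelbaev's function $d(\cdot)$ of \lemref{lem2.2}: $|u'(x)|/u(x)\asymp v'(x)/v(x)\asymp 1/d(x)$ and $u(x)v(x)\asymp d(x)$. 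The log-derivative bound together with $d(x)\le d_0$ forces exponential decay of $u$ to the right on scale $\lesssim d_0$, whence $\int_r^\infty u^{p'}\lesssim d_0\,u(r)^{p'}$ (and symmetrically for $v$); in particular the integrals defining $y$ converge. Substituting into $B_2$ and using $v'(r)\lesssim v(r)/d(r)$ and $u(r)v(r)\lesssim d(r)$ yields
\begin{equation*}
\Big(\int_{-\infty}^r qv^p\Big)^{1/p}\Big(\int_r^\infty u^{p'}\Big)^{1/p'}\lesssim d_0^{1/p'}\,\frac{u(r)v(r)}{d(r)^{1/p}}\lesssim d_0^{1/p'}d(r)^{1/p'}\le c(p)\,d_0^{2/p'},
\end{equation*}
so $B_2<\infty$, and $B_1<\infty$ by the mirror argument.

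The hard part is the second ingredient: the comparison of $u,v,u',v'$ with $d(x)$ and the uniform control drawn from $d_0<\infty$. These are the Otelbaev-type estimates of \cite{1},\cite{5},\cite{9}; by contrast the integration by parts and the Muckenhoupt reduction are routine. I would also note that the constant genuinely depends on $d_0$: already for $q\equiv Q>0$ one has $d\equiv Q^{-1/2}$ and $\|q^{1/p}y\|_p\le Q^{-1/p'}\|f\|_p=d^{2/p'}\|f\|_p$, which matches the bound $c(p)\,d_0^{2/p'}$ above; when $p=1$ the exponent $2/p'$ vanishes and the constant becomes absolute, consistent with the special status of $p=1$.
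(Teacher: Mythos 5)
The first thing to note is that this paper never proves \thmref{thm2.5}: it is quoted from \cite{1} as a known result, so there is no in-paper proof to compare against, and your argument has to be judged on its own. Its skeleton is sound. The kernel $G(x,t)=u(x)v(t)$ for $t\le x$, $G(x,t)=u(t)v(x)$ for $t\ge x$ is the right one for the FSS of \lemref{lem2.1} (the normalization $v'u-u'v=1$ makes $-y''+qy=f$ check out); the identification of the unique $L_p$-solution with $\int G(\cdot,t)f(t)\,dt$ is legitimate once one knows $\int G(\cdot,t)f(t)\,dt\in L_p$ (which follows from the same kind of Muckenhoupt bounds, run without the weight $q^{1/p}$, plus uniqueness); the constants $B_1,B_2$ are the correct Muckenhoupt quantities for these two Hardy pieces; and the integration-by-parts bounds $\int_r^\infty qu^p\le u(r)^{p-1}|u'(r)|$ and $\int_{-\infty}^r qv^p\le v(r)^{p-1}v'(r)$ are valid, since $u'\to0$ at $+\infty$ and $v'\to0$ at $-\infty$.

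The gap is in your ``second ingredient''. The two-sided comparisons $|u'(x)|/u(x)\asymp v'(x)/v(x)\asymp 1/d(x)$ are \emph{false} under \eqref{2.1} and $d_0<\infty$, so no citation can supply them. The Wronskian gives the identity $\frac{|u'(x)|}{u(x)}+\frac{v'(x)}{v(x)}=\frac{1}{u(x)v(x)}$, so only the \emph{sum} of the two logarithmic derivatives is comparable to $1/d(x)$ (granting $uv\asymp d$); each one separately can be far smaller. Concretely, take $q=Q$ on $[-1,0]\cup[M,M+1]$, $q=\varepsilon^2$ on $(0,M)$, and $q=1$ elsewhere, with $M$ fixed, $\varepsilon$ small, $Q$ large. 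Then \eqref{2.1} holds and $d_0\le M+2$ uniformly in $Q$, while $d(0)\approx\sqrt{2/Q}$; but $u$ is essentially linear across $(0,M)$, with slope dictated by the barrier $[M,M+1]$, so $|u'(0)|/u(0)\approx 1/M$, which is not $\ge c/d(0)$. (Stringing such blocks along the axis with $Q_n\to\infty$ produces a single admissible $q$ with $\inf_x d(x)|u'(x)|/u(x)=0$.) Hence your derivation of the exponential decay --- ``$|u'|/u\gtrsim 1/d(x)$, and $d(x)\le d_0$'' --- passes through a false lemma.

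The repair is easy, because what your computation actually consumes is true and elementary, so the proof becomes self-contained. Since $d(x+d_0)\le d_0$, the window $[x,x+2d_0]$ contains the full window of the point $x+d_0$, so $\int_x^{x+2d_0}q\ge 2/d_0$ for every $x$; since $|u'(x)|=\int_x^\infty qu$ and $|u'|$ is nonincreasing, $|u'(x)|\ge\frac{2}{d_0}u(x+2d_0)\ge\frac{2}{d_0}\bigl(u(x)-2d_0|u'(x)|\bigr)$, whence $|u'(x)|/u(x)\ge\frac{2}{5d_0}$, and symmetrically $v'(x)/v(x)\ge\frac{2}{5d_0}$. This gives exactly your decay integrals $\int_r^\infty u^{p'}\le c\,d_0u(r)^{p'}$, $\int_{-\infty}^rv^{p'}\le c\,d_0v(r)^{p'}$, and, fed into the Wronskian identity, also $u(x)v(x)\le\frac{5}{4}d_0$, which replaces your use of $uv\lesssim d(r)$. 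Then, using $uv'\le1$,
\[
B_2\le\sup_r\bigl(v(r)^{p-1}v'(r)\bigr)^{1/p}\bigl(c\,d_0u(r)^{p'}\bigr)^{1/p'}
=(c\,d_0)^{1/p'}\sup_r\bigl((uv)^{p-1}(uv')\bigr)^{1/p}
\le c(p)\,d_0^{2/p'},
\]
and likewise $B_1\le c(p)\,d_0^{2/p'}$ using $|u'|v\le1$. This is precisely your final bound, obtained with no appeal to Otelbaev-type theory: the only nonelementary inputs that remain are \lemref{lem2.1} and the Green-function representation of the $L_p$-solution.
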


Note that in \S3, in the course of expositions, we present some technical assertions.

\section{Proofs}
In this section, we present the proofs of all our statements.

\renewcommand{\qedsymbol}{}
\begin{proof}[Proof of \thmref{thm1.2}]  Necessity.
\end{proof}

We need some auxiliary assertions.

\begin{lem}\label{lem3.1} If $S_p^{(2)}(R,q)\hookrightarrow L_p,$ $p\in(1,\infty)$, then \eqref{2.1} holds.
\end{lem}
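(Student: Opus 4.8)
The plan is to prove the contrapositive: assuming \eqref{2.1} fails, I will exhibit a one-parameter family of test functions that defeats any possible embedding constant. First I would record what the hypothesis provides. An embedding $S_p^{(2)}(R,q)\hookrightarrow L_p$ means there is a constant $c>0$ with
\[
\|y\|_p\le c\big(\|y''-qy\|_p+\|q^{1/p}y\|_p\big)\qquad\text{for every }y\in S_p^{(2)}(R,q).
\]
Next I would unwind the negation of \eqref{2.1}. Since $q\ge 0$, if \eqref{2.1} fails then $\int_{-\infty}^{x_0}q(t)\,dt=0$ or $\int_{x_0}^{\infty}q(t)\,dt=0$ for some $x_0\in\mathbb{R}$, and in either case $q=0$ almost everywhere on an entire half-line. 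By a translation and, if necessary, the reflection $x\mapsto -x$, it suffices to treat the case $q=0$ a.e.\ on $(-\infty,0)$.

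On this half-line the operator $y\mapsto y''-qy$ collapses to $y\mapsto y''$ and the weight $q^{1/p}$ vanishes identically, so I would build ``plateau'' functions supported there whose $L_p$-norm grows while their $S_p^{(2)}$-norm stays bounded. Concretely, fix once and for all a profile $\eta\in C^\infty(\mathbb{R})$ with $\eta\equiv 0$ on $(-\infty,0]$, $\eta\equiv 1$ on $[1,\infty)$, and $0\le\eta\le 1$, and set for $n\ge 1$
\[
y_n(x)=\eta(x+n+2)\,\eta(-1-x),\qquad x\in\mathbb{R}.
\]
Then $y_n$ is smooth, $y_n\equiv 1$ on $[-n-1,-2]$, and $\supp y_n\subset[-n-2,-1]\subset(-\infty,0)$; in particular $y_n\in AC_{\loc}^{(1)}(\mathbb{R})\cap L_p$. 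One checks that $y_n''\equiv 0$ on the plateau $[-n-1,-2]$, while on the two transition intervals $[-n-2,-n-1]$ and $[-2,-1]$ the function $y_n$ coincides with a translate (or reflection) of $\eta$, so $|y_n''|\le\|\eta''\|_\infty$ there and $y_n''=0$ elsewhere.

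With this construction the verification is then immediate: because $q=0$ a.e.\ on $\supp y_n$ we get $qy_n=0$ a.e., hence $\|q^{1/p}y_n\|_p=0$ and $\|y_n''-qy_n\|_p=\|y_n''\|_p\le 2^{1/p}\|\eta''\|_\infty$, a bound \emph{independent of $n$}. On the other hand $\|y_n\|_p^p\ge\int_{-n-1}^{-2}1\,dx=n-1\to\infty$. Feeding $y_n$ into the embedding inequality would force $\|y_n\|_p\le c\,2^{1/p}\|\eta''\|_\infty$ for all $n$, a contradiction. The step I expect to be the real crux is precisely this uniform control of the $S_p^{(2)}$-norm: the key observation is that a \emph{fixed} transition profile contributes an amount to $\|y_n''\|_p$ that does not depend on the plateau width $n$, and that both terms of the $S_p^{(2)}$-norm are controllable at all only because the failure of \eqref{2.1} makes $q$ vanish \emph{identically} on the half-line, not merely have small integral there.
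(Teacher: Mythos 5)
Your proposal is correct and follows essentially the same route as the paper: prove the contrapositive by placing test functions on the half-line where $q$ vanishes identically (so that $\|q^{1/p}y_n\|_p=0$ and $\|y_n''-qy_n\|_p=\|y_n''\|_p$), and let their $L_p$-norms grow while their $S_p^{(2)}$-norms stay controlled, defeating any embedding constant. The only cosmetic differences are your translation/reflection normalization and your fixed-width transition profile (bounded $\|y_n''\|_p$) versus the paper's dilated bump $\varphi\left(\frac{x-n}{n}\right)$ (whose $\|y_n''\|_p$ even decays like $n^{1/p-2}$); both yield the required contradiction.
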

\renewcommand{\qedsymbol}{\openbox}
\begin{proof} Suppose that \eqref{2.1} does not hold. Suppose, say, that there exists $x_0\in\mathbb R$ such that
\begin{equation}\label{3.1}
\int_{x_0}^\infty q(t)dt=0.
\end{equation}
Let us introduce a cutting function $\varphi(\cdot)\in C_0^\infty(\mathbb R)$ such that $\supp \varphi=[-4^{-1},4^{-1}],$ $\varphi(x)\equiv 1$ for $|x|\le 8^{-1}$ and $0\le \varphi(x)\le 1$ for $8^{-1}\le|x|\le 4^{-1}$. Consider the sequence $\{y_n(x)\}_{n=2}^\infty,$ $x\in\mathbb R$ where
\begin{equation}\label{3.2}
y_n(x)=\varphi\left(\frac{x-n}{n}\right),\quad x\in\mathbb R,\quad n>n_0=\max\left\{1,\, \frac{4}{3}x_0\right\}.
\end{equation}
It is easy to see that
\begin{equation}\label{3.3}
q(x)y_n(x)=0\qquad\text{for}\qquad x\in\mathbb R,\quad n>n_0.
\end{equation}
Indeed, for $x\le x_0$, \eqref{3.3} follows from the obvious equality
$$(-\infty,x_0]\cap\supp y_n=\emptyset,\qquad n>n_0$$
and for $x\ge x_0$, \eqref{3.3} follows from \eqref{1.2} and \eqref{3.1}. Further, by \eqref{3.3}, we have
$$-y_n''(x)+q(x)y_n(x)=-y_n''(x),\qquad x\in \mathbb R,\quad n>n_0.$$
We have the following estimates:
$$\|y_n''\|_p^p=\int_{-\infty}^\infty\left|\varphi\left(\frac{x-n}{n}\right)''\right|^pdx=\int_
{\frac{3}{4}n}^{\frac{5}{4}n}\left|\left[
 \varphi\left(\frac{x-n}{n}\right)\right]''\right|^pdx\le \frac{c}{n^{2p-1}},$$
$$\int_{-\infty}^\infty\left| q^{\frac{1}{p}}(x)y_n(x)\right|^pdx=\int_{-\infty}^{x_0}q(x)\cdot Odx+\int_{x_0}^\infty O\cdot|y_n(x)|^pdx=0,$$
$$\|y_n\|_p^p=\int_{-\infty}^\infty|y_n(x)|^pdx\ge\int_{\frac{3}{4}n}^{\frac{5}{4}n}|y_n(x)|^pdx\ge\int_{\frac{7}{8}n}^
{\frac{9}{8}n}|y_n(x)|^pdx=\frac{n}{4}.$$
Hence $y_n\in S_p^{(2)}(R,q)$ for $n\ge n_0.$ Therefore, from the embedding $S_p^{(2)}(R,q)\hookrightarrow L_p$, we obtain
$$\left(\frac{n}{4}\right)^{\frac{1}{p}}\le \|y_n\|_p\le C\{\|y_n''-qy_n\|_p+\|q^{\frac{1}{4}}y_n\|_p\}\le \frac{C}{n^{2-\frac{1}{p}}}\quad\Rightarrow\quad n^2\le C<\infty,\quad n>n_0,$$
contradiction.
\end{proof}

Thus, from Lemmas \ref{lem3.1} and \ref{lem2.2}, it follows that for every $x\in\mathbb R$ the function $d(x)$ is well-defined. Now, for a given $x\in\mathbb R$, consider the following Cauchy problem:
\begin{equation}\label{3.4}
y''(t)=q(t)y(t),\qquad t\in\mathbb R,
\end{equation}
\begin{equation}\label{3.5}
y (t)\big|_{t=x}=1,\qquad y'(t)\big|_{t=x}=0.
\end{equation}

\begin{lem}\label{lem3.2} \cite{1} Under conditions \eqref{1.2} and \eqref{3.1}, for the solution $y$ of problem \eqref{3.4}--\eqref{3.5}, we have the relations
\begin{equation}\label{3.6}
y (t)\ge1,\qquad \sign y'(t)=\sign(t-x),\qquad t\in\mathbb R,
\end{equation}
\begin{equation}\label{3.7}
1\le y(t)\le 4\qquad\text{for}\qquad t\in\left[x-\frac{d(x)}{4},\, x+\frac{d(x)}{4}\right],
\end{equation}
\begin{equation}\label{3.8}
d(x)|y'(t)|\le 8 \qquad\text{for}\qquad t\in\left[x-\frac{d(x)}{4},\, x+\frac{d(x)}{4}\right].
\end{equation}
\end{lem}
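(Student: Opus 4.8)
The plan is to recast the Cauchy problem \eqref{3.4}--\eqref{3.5} in Volterra (integral) form and then extract everything from the single sign hypothesis $q\ge0$ in \eqref{1.2} together with the defining relation of $d(x)$. Integrating $y''=qy$ twice against the data $y(x)=1$, $y'(x)=0$ gives, for every $t\in\mathbb R$,
\begin{equation*}
y'(t)=\int_x^t q(s)y(s)\,ds,\qquad y(t)=1+\int_x^t (t-s)q(s)y(s)\,ds,
\end{equation*}
the integrals for $t<x$ being read with the usual orientation. Existence and uniqueness of $y\in AC_{\loc}^{(1)}(\mathbb R)$ follows from Carath\'eodory theory since $q\in L_1^{\loc}(\mathbb R)$. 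The only quantitative input I would use beyond these identities is $\int_{x-d(x)}^{x+d(x)}q(s)\,ds=2/d(x)$, which is exactly \eqref{2.5} from \lemref{lem2.2}.

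First I would establish \eqref{3.6}. Since $q\ge0$, the monotone (Picard) iteration $y_0\equiv1$, $y_{n+1}(t)=1+\int_x^t(t-s)q(s)y_n(s)\,ds$ produces an increasing sequence of nonnegative functions converging to $y$, so $y(t)\ge1$ everywhere; note that for $t<x$ the kernel identity rewrites as $y(t)=1+\int_t^x(s-t)q(s)y(s)\,ds$ with a nonnegative integrand, so positivity is preserved on both sides of $x$. Feeding $y\ge1>0$ back into the first identity, $y'(t)=\int_x^t q(s)y(s)\,ds$ is $\ge0$ for $t>x$ and $\le0$ for $t<x$, with strict inequality once $q$ has accumulated positive mass; this is the stated relation $\sign y'(t)=\sign(t-x)$.

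For the upper bound in \eqref{3.7} I would bootstrap on the maximum. Writing $d=d(x)$ and $M=\max_{[x,\,x+d/4]}y$ (finite by continuity), for $t$ in this interval one has $t-s\le d/4$ and $y(s)\le M$ for $s\in[x,t]$, so the second identity yields
\begin{equation*}
y(t)\le 1+\frac{d}{4}\,M\int_x^{x+d/4}q(s)\,ds\le 1+\frac{d}{4}\,M\int_{x-d}^{x+d}q(s)\,ds=1+\frac{d}{4}\,M\cdot\frac{2}{d}=1+\frac{M}{2}.
\end{equation*}
Maximizing over $t$ gives $M\le 1+M/2$, i.e. $M\le2$; the symmetric estimate on $[x-d/4,\,x]$ gives the same bound, so $1\le y(t)\le2\le4$ on $[x-d/4,\,x+d/4]$, which is \eqref{3.7}. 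Then \eqref{3.8} is immediate, since for $t$ in that interval
\begin{equation*}
|y'(t)|\le\int_{x-d/4}^{x+d/4}q(s)y(s)\,ds\le 2\int_{x-d}^{x+d}q(s)\,ds=\frac{4}{d},
\end{equation*}
whence $d(x)|y'(t)|\le4\le8$.

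The integral identities and the arithmetic are routine; the one point demanding care is keeping the bootstrap self-consistent. One must bound $y$ on the half-interval $[x,\,x+d/4]$ using only its values on that same interval, which forces the kernel factor $t-s$ to be controlled by $d/4$ rather than by the full length $d$, and this is precisely why the defining relation of $d(x)$ enters with the constant that closes the estimate $M\le1+M/2$. I expect no genuine difficulty beyond this localization; the constants $4$ and $8$ in the statement are deliberately slack, and the argument in fact delivers the sharper bounds $y\le2$ and $d(x)|y'|\le4$.
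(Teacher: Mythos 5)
Your proof is correct, and it is worth noting at the outset that the paper under review does not prove this lemma at all: it is quoted verbatim from the earlier paper cited as \cite{1}, so there is no internal argument to compare against. Your self-contained derivation --- Volterra integral form of the Cauchy problem, monotone iteration to get $y\ge1$, and a bootstrap on $M=\max y$ over the half-interval closed by the defining relation \eqref{2.5} of $d(x)$ --- is the natural route and in fact yields the sharper constants $y\le2$ and $d(x)|y'|\le4$ in place of $4$ and $8$; the localization point you flag (controlling the kernel factor by $d/4$ so that \eqref{2.5} produces the factor $1/2$ that closes the fixed-point inequality $M\le1+M/2$) is exactly the crux, and you handle it correctly on both half-intervals. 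Two small remarks. First, the hypothesis ``\eqref{3.1}'' in the statement is evidently a misprint for \eqref{2.1} (as literally written, \eqref{3.1} is the degeneracy that \lemref{lem3.1} rules out, and it is incompatible with the existence of $d(x)$); your proof implicitly uses the correct hypothesis, since you invoke \eqref{2.5}. Second, the equality $\sign y'(t)=\sign(t-x)$ in \eqref{3.6} cannot hold literally in full strictness under \eqref{2.1} alone: if $q$ vanishes a.e.\ on an interval to the right of $x$, then $y'\equiv0$ there while $\sign(t-x)=1$. What your argument proves --- $y'(t)(t-x)\ge0$, with strict inequality once $\int_x^t q>0$ --- is the correct robust content, and your explicit acknowledgment of this caveat is appropriate; the defect lies in the statement as transcribed, not in your proof.
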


\begin{lem}\label{lem3.3} \cite{1} Suppose that conditions \eqref{1.2} and \eqref{3.1} hold. Let $\varphi$ be the cutting function from \lemref{lem3.1}, let $y$ be the solution of problem \eqref{3.4}--\eqref{3.5}, and let
\begin{equation}\label{3.9}
z(t)=d(x)^{2-\frac{1}{p}}\left[\varphi\left(\frac{t-x}{d(x)}\right)\right]\cdot y(t),\qquad t\in R.
\end{equation}
Then $z(t)$ is a solution of equation \eqref{1.1} with $f=f_1$
\begin{equation}\label{3.10}
f_1(t)=-d(x)^{2-\frac{1}{p}}\left[\varphi\left(\frac{t-x}{d(x)}\right)\right]''\cdot y(t)-2d(x)^{2-\frac{1}{p}}\cdot\left[\varphi\left(\frac{t-x}{d(x)}\right)\right]'\cdot y'(t),\ t\in\mathbb R.
\end{equation}
In addition, we have the estimates
\begin{equation}\label{3.11}
\|f_1\|_p\le C,\qquad x\in\mathbb R,
\end{equation}
\begin{equation}\label{3.12}
c^{-1}d(x)^2\le \|z\|_p\le c d(x)^2,\qquad x\in\mathbb R.
\end{equation}
\end{lem}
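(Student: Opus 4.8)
The plan is to obtain both the identity and the two estimates by direct computation, using only that $y$ solves the homogeneous equation \eqref{3.4} together with the pointwise bounds of \lemref{lem3.2}. Throughout I would abbreviate $d=d(x)$ and set $\psi(t)=\varphi((t-x)/d)$, so that $z=d^{2-1/p}\psi y$.

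First I would verify \eqref{3.10}. Differentiating $z$ twice gives $z''=d^{2-1/p}(\psi''y+2\psi'y'+\psi y'')$, whence
\[
-z''+qz=-d^{2-1/p}\psi''y-2d^{2-1/p}\psi'y'-d^{2-1/p}\psi(y''-qy).
\]
Since $y''=qy$ by \eqref{3.4}, the last term vanishes and the two remaining terms are exactly $f_1$ as in \eqref{3.10}. This shows that $z$ solves \eqref{1.1} with $f=f_1$.

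Next, for \eqref{3.11} I would use that $\supp\psi\subset[x-d/4,\,x+d/4]$, where \eqref{3.7}--\eqref{3.8} give $1\le y\le 4$ and $|y'|\le 8/d$. Writing $\psi'=d^{-1}\varphi'((t-x)/d)$ and $\psi''=d^{-2}\varphi''((t-x)/d)$, the two summands of $f_1$ become $d^{-1/p}\varphi''((t-x)/d)\,y$ and $2d^{1-1/p}\varphi'((t-x)/d)\,y'$. Taking $L_p$-norms and substituting $s=(t-x)/d$ (so $dt=d\,ds$), the powers of $d$ cancel precisely: the first summand is bounded by $4\|\varphi''\|_p$, and the second, after using $|y'|\le 8/d$, by $16\|\varphi'\|_p$. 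Both bounds are independent of $x$, which yields \eqref{3.11}.

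Finally, for \eqref{3.12} the same substitution gives $\|z\|_p^p=d^{2p}\int_{-1/4}^{1/4}|\varphi(s)|^p|y(x+ds)|^p\,ds$. The upper bound is immediate from $|\varphi|\le1$ and $y\le4$. For the lower bound I would restrict the integral to $|s|\le1/8$, where $\varphi\equiv1$ and $y\ge1$, obtaining $\|z\|_p^p\ge\tfrac14 d^{2p}$; hence $c^{-1}d^2\le\|z\|_p\le cd^2$. The computation is elementary, and the only point demanding care is the bookkeeping of the powers of $d(x)$: the whole content of the lemma is that the normalizing exponent $2-1/p$ is chosen exactly so that $\|f_1\|_p$ is $d$-independent while $\|z\|_p\asymp d(x)^2$. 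I do not expect any genuine obstacle beyond confirming these cancellations, all of which rest on the uniform estimates of \lemref{lem3.2}.
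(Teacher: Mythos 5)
Your proof is correct: the identity \eqref{3.10} follows exactly as you compute (the term $\psi(y''-qy)$ vanishes since $y''=qy$), and your bookkeeping of the powers of $d(x)$ in the substitution $s=(t-x)/d(x)$, combined with the bounds \eqref{3.7}--\eqref{3.8} on $\supp\varphi((\cdot-x)/d(x))\subset[x-d(x)/4,\,x+d(x)/4]$, gives \eqref{3.11} and \eqref{3.12} with explicit constants. The paper itself does not prove this lemma (it cites \cite{1}), but your direct verification is the standard argument and mirrors the technique the paper does display in the proof of \lemref{lem3.4} (same change of variables and use of \lemref{lem3.2}), so there is nothing further to reconcile.
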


\begin{lem}\label{lem3.4}
Under conditions \eqref{1.2} and \eqref{3.1}, for the function $z$ (see \eqref{3.9}) we have the estimate
\begin{equation}\label{3.13}
\|q^{\frac{1}{p}}z\|_p\le cd^{\frac{2}{p'}}(x),\qquad p'=\frac{p}{p-1},\qquad x\in\mathbb R.
\end{equation}
\end{lem}

\begin{proof}
Below we successively use \eqref{3.9}, \eqref{3.6} and \eqref{2.5}:
\begin{align*}
\|q^{\frac{1}{p}}z\|_p^p&=\int_{-\infty}^\infty q(t)|z(t)|^pdt =\int_{x-\frac{d(x)}{4}}^{x+\frac{d(x)}{4}}q(t)d(x)^{2p-1}
\left|\varphi\left(\frac{t-x}{d(x)}\right)\right|^p|y(t)|^pdt\\
&\le cd(x)^{2p-1}\int_{x-d(x)}^{x+d(x)}q(t)dt=cd(x)^{2p-2}.
\end{align*}
\end{proof}

 Thus, since $z$ is a solution of \eqref{1.1} with $f=f_1,$ from \eqref{3.11} and \eqref{3.13} it follows that $z\in S_p^{(2)}(R,q)$. Then from the embedding $S_p^{(2)}(R,q)\hookrightarrow L_p,$ using \eqref{3.12}, \eqref{3.11} and \eqref{3.13}, we obtain
 \begin{equation}\label{3.14}
 d^2(x)\le c\|z\|_p\le c\big(\|z''-qz\|_p+\big\|q^{\frac{1}{p}}z\big\|_p\big)\\
 \le c(1)+d(x)^{\frac{2}{p'}},\qquad x\in\mathbb R.
 \end{equation}
 Since $2>\frac{2}{p'}$, from \eqref{3.14} it follows that $d_0<\infty$ (see \eqref{2.6}). It remains to refer to \thmref{thm2.4}.

 \begin{remark}\label{rem3.5} Test functions \eqref{3.9} and Lemmas \ref{lem3.2} and \ref{lem3.3} were first used (for different purposes) in \cite{10}.
 \end{remark}

\begin{proof}[Proof of \thmref{thm1.2}]  Sufficiency.

Suppose that for $p\in(1,\infty)$ equation \eqref{1.1} is correctly solvable in $L_p.$ Let us show that $S_p^{(2)}(R,q)\hookrightarrow L_p.$ Let $y\in S_p^{(2)}(R,q).$ Denote
\begin{equation}\label{3.15}
f_0(x)=-y''(x)+q(x)y(x),\qquad x\in\mathbb R.
\end{equation}
Clearly, $f_0\in L_p$ (see \eqref{1.6}). Consider the equation
\begin{equation}\label{3.16}
-\tilde y''(x)+q(x)\tilde y(x)=f_0(x),\qquad x\in R.
\end{equation}
By I)--II), there exists a unique solution $\tilde y\in L_p$ of \eqref{3.16}. Then
\begin{gather}
-\tilde y''(x)+q(x)\tilde y(x)=f_0(x)=-y''(x)+q(x)y(x),\qquad x\in\mathbb R\quad\Rightarrow\nonumber\\
z''(x)=q(x)z(x),\qquad z(x)=\tilde y(x)-y(x),\qquad x\in\mathbb R.\label{3.17}
\end{gather}

Note that \eqref{2.7} and \eqref{1.6} imply, respectively, the inequalities
 \begin{gather}
\big\|q^{\frac{1}{p}}\tilde y\big\|_p\le c\|f_0\|_p<\infty,\qquad \big\|q^{\frac{1}{p}}y\big\|_p<\infty\quad\Rightarrow\nonumber\\
\big\|q^{\frac{1}{p}}z\big\|_p=\big\|q^{\frac{1}{p}}(\tilde y-y)\big\|_p\le\big\|q^{\frac{1}{p}}\tilde y\big\|_p+\big\|q^{\frac{1}{p}}y\big\|_p<\infty.\label{3.18}
\end{gather}
On the other hand, from \eqref{3.17} and \lemref{lem2.1}, we obtain
 $$z(x)=c_1u(x)+c_2v(x),\qquad x\in\mathbb R.$$

Let us show that $c_1=c_2=0.$ Let, say, $c_2\ne0.$ Then (see \eqref{2.4}) there exists $x_0\gg1$ such that
\begin{equation}\label{3.19}
\left|\frac{c_1}{c_2}\right|\frac{u(x)}{v(x)}\le \frac{1}{2}\qquad\text{for}\qquad x\ge x_0.
\end{equation}
Now from \eqref{3.17}, \eqref{3.18}, \eqref{3.19}, \eqref{2.3} and the inequality $m(a)>0,$ $a\in (0,\infty)$ (see \thmref{thm1.2}), we obtain
\begin{align*}
\infty&>\big\|q^{\frac{1}{p}}z\big\|_p>\int_{x_0}^\infty q(x)|c_1u(x)+c_2v(x)|^pdx\\
&\ge |c_2|^p\int_{x_0}^\infty q(x)v(x)^p\left|1-\left|\frac{c_1}{c_2}\right|\frac{u(x)}{v(x)}\right|^pdx\\
&\ge\left|\frac{c_2}{2}\right|^pv(x_0)^p\int_{x_0}^\infty q(t)dt=\infty.
\end{align*}
Contradiction. Hence $c_2=0$ and, similarly,$c_1=0.$ Thus $\tilde y=y.$ Therefore $y\in L_p,$ and using II) we obtain
$$\|y\|_p=\|\tilde y\|_p\le c(p)\|\tilde y''-q\tilde y\|_p=c(p)\|f_0\|_p=c(p)\|y''-qy\|_p\le c(p)\big(\|y''-qy\|+\big\|q^{\frac{1}{p}}\big\|_p\big).$$
Here $c(p)$ is an absolute constant from \eqref{1.3}. We conclude that $S_p^{(2)}(R,q)\hookrightarrow L_p$ and, in addition, from \cite{1} it follows that this embedding holds if and only if $m(a)>0$ for some $a\in(0,\infty)$ (see \eqref{1.5}).
 \end{proof}

 \begin{proof}[Proof of \thmref{thm1.4}]
 Since equation \eqref{1.1} is correctly solvable in $L_1,$ from the embeddings $W_1^{(2)}(R,q)\hookrightarrow L_1,$ $ S_1^{(2)}(R,q)\hookrightarrow L_1$ (see Theorems \ref{thm1.1} and \ref{thm1.2}), it follows that $W_1^{(2)}(R,q)$ and $S_1^{(2)}(R,q)$ are normed spaces with norms
 $$\|y\|_{W_1^{(2)}(R,q)}=\|y''\|_1+\|qy\|_1,\qquad \|y\|_{S_1^{(2)}(R,q)}=\|y''-qy\|_1+\|qy\|_1,$$
 respectively. Relation \eqref{1.7} follows from the obvious inequalities based on the triangle inequality for norms:
 $$\frac{1}{2}\|y\|_{W_1^{(2)}(R,q)}\le \|y\|_{S_1^{(2)}(R,q)}\le 2\|y\|_{W_1^{(2)}(R,q)}.$$
 \end{proof}

 \renewcommand{\qedsymbol}{}
\begin{proof}[Proof of \thmref{thm1.6}]  Necessity.

 Suppose that for $p\in(1,\infty)$ equation \eqref{1.1} is correctly solvable and \eqref{1.9} holds. We check that \eqref{1.8} holds. Let $y$ be a solution of \eqref{1.1} belonging to the class $L_p.$ Then $y\in S_p^{(2)}(R,q)$ by \thmref{thm1.2}, and, in addition, \eqref{2.7} holds. Together with \eqref{1.9}, this implies that
 $$\|y''\|_p+\|qy\|_p\le c(p)\big(\|y''-qy\|_p+\|q^{\frac{1}{p}}y\|_p\big)\le 2c(p)\|y''-qy\|_p\ \Rightarrow\ \eqref{1.8}.$$
\end{proof}

\renewcommand{\qedsymbol}{\openbox}

\begin{proof}[Proof of \thmref{thm1.6}]  Sufficiency.
Suppose that for some $p\in(1,\infty)$ equation \eqref{1.1} is correctly solvable and separable in $L_p$. Let us check \eqref{1.7}. By \thmref{thm1.2}, for any function $f\in L_p,$ the solution $y\in L_p$ of \eqref{1.1} belongs to the space $S_p^{(2)}(R,q).$   Since \eqref{1.8} holds, we have $y\in W_p^{(2)}(R,q).$ Further, if $y\in W_p^{(2)}(R,q),$ then $(y''-qy)\in L_p$ by the triangle inequality, and $q^{\frac{1}{p}}\in L_p$ by \eqref{2.7}. Hence $y\in S_p^{(2)}(R,q).$ Thus $S_p^{(2)}(R,q)=W_p^{(2)}(R,q)$ set-theoretically. Finally, from  correct solvability, separability and \eqref{2.7}, we obtain
\begin{gather*}
\|y''-qy\|_p\le\|y''\|_p+\|qy\|_p,\qquad y\in S_p^{(2)}(R,q)\\
\big\|q^{\frac{1}{p}}y\big\|_p\le c(p)\|y''-qy\|_p\le c(p)(\|y''\|_p+\|qy\|_p)\\
\|y''-qy\|_p+\big\|q^{\frac{1}{p}}y\big\|_p\le (c(p)+1)(\|y''\|_p+\|qy\|_p).
\end{gather*}

The converse inequality follows from \eqref{1.8}:
$$\|y''\|_p+\|qy\|_p\le c(p)(\|y''-qy\|_p)\le c(p)\big(\|y''-qy\|_p+\big\|q^{\frac{1}{p}}y\big\|_p\big).$$
\end{proof}

\end{document}